\Crefname{ALC@unique}{Line}{Lines} 
\title{	Convergence estimation and characteristic analysis of a two-level iterative algorithm for the discretized three-temperature energy linear systems 
}
\author{ Yue Hao   \thanks{Laboratory of Computational Physics, Institute of Applied Physics and Computational Mathematics, Beijing 100088, China.   Email addresses: \email{hao\_yue1993@163.com}(Yue Hao),  \email{huangsilu1992@163.com}(Silu Huang), \email{xwxu@iapcm.ac.cn}(Xiaowen Xu).}
\and  Silu  Huang \footnotemark[1]
\and  Xiaowen Xu \footnote{Corresponding author}  \footnotemark[1] 
}
\begin{document}

\maketitle

\begin{abstract}

  For solving the discretized three-temperature energy linear systems, Xu et al. proposed a physical-variable based coarsening two-level iterative method (PCTL algorithm) in 2009 and verified its efficiency by numerical experiments in practical applications. In this paper, we study in detail the specific convergence property of the PCTL algorithm based on the theory of algebraic multigrid method (AMG), and give a reasonable upper bound on the convergence factor, which provides a theoretical guarantee for the PCTL algorithm.
  Moreover, we also analyse the algebraic features that affect the convergence of the PCTL algorithm, such as diagonal dominance and coupling strength, hoping provides theoretical guidance for the applications and algorithm optimization of the PCTL algorithm.

\end{abstract}
  
\begin{keywords}
  Three temperature energy equation. Physical-variable based coarsening two-level iterative method. Algebraic multigrid method.  Convergence. Influence factor. 
\end{keywords}
   
\begin{AMS}
   65F10, 65N12, 65N55
\end{AMS}

\section{Introduction}\label{sec:intro}
In this paper, we aim to evaluate the convergence speed of the physical-variable based coarsening two level (PCTL) iterative algorithm \cite{Xu2009algebraic,Huang2022aSetup}, which is proposed
for solving the following linear systems arising from the three-temperature (3-T) energy equation in radiation hydrodynamics (RHD) applications:
\begin{equation}\label{eq:main}
Ax:= \begin{bmatrix} 
  A_r &  0   & D_{re} \\
  0   & A_i  & D_{ie} \\
  D_{er} & D_{ei}  & A_e 
\end{bmatrix} 
\begin{bmatrix}  x_r \\  x_i \\ x_e  \end{bmatrix} 
= \begin{bmatrix}  b_r \\  b_i \\ b_e  \end{bmatrix} =:b,
\end{equation}
where $A \in \mathbb{R}^{3n \times 3n}$ with $n$ be the number of grids, the diagonal submatrices $A_{\alpha} = (a_{kj}^{\alpha})_{n \times n} \in \mathbb{R}^{n \times n} \, (\alpha = r, i, e)$ reflect the diffusion of the radiation, ion and electron temperatures, and the coupling terms $D_{\alpha \beta} \in \mathbb{R}^{n \times n} \, (\alpha, \beta = r, i, e)$ express the energy exchange between the $\alpha$-th and the $\beta$-th physical quantities. In general, the 3-T linear system has the following properties: 
\begin{itemize}
  \item[(1)] The diagonal submatrices $A_{\alpha} \, (\alpha = r, i, e)$ are {\it M-matrix}, that is $a_{kk}^{\alpha} >0$, $a_{kj}^{\alpha} \leq 0 \, (j \neq k)$, and all elements of $A_{\alpha}^{-1}$ are nonnegative. Moreover, $A_{\alpha}$ is also a symmetric and positive definite matrix.
  \item[(2)] The coupling terms $D_{\alpha \beta} \, (\alpha, \beta = r, i, e)$
  are {\it diagonal matrices} with {\it negative }diagonal elements.
  \item[(3)] The coefficient matrix $A=(a_{kj})$ is {\it strong diagonally dominant} with $a_{kk} > \sum_{j \neq k} |a_{kj}|$ for all $1 \leq k \leq 3n$.
\end{itemize}
In this paper, our analysis is focused on the symmetric case with $D_{re}=D_{er}^T$ and $D_{ie}=D_{ei}^T$.  

Solving 3-T energy equations is an important task of the numerical simulation of RHD problems, which arising from many fields, such as the inertial confinement fusion (ICF), astrophysical phenomena and so on \cite{Mihalas1984foundations,Pomraning1973the}. However, complex application features make the coefficient matrix in 3-T linear system \cref{eq:main} generally ill-conditioned and difficult to solve, such that solving 3-T linear systems takes up most of the time of RHD simulation. Thus, developing an efficient and practical algorithm for solving \cref{eq:main} is a crucial problem. 
In recent years, numerous methods have been proposed for solving 3-T linear systems, in which preconditioned Krylov subspace methods \cite{Saad2003iterative,An2019operator} are the most favorable choices,  with the   
preconditioners mainly include incomplete LU factorization \cite{Baldwin1999iterative}, geometric and algebraic multigrid \cite{Baldwin1999iterative,Mo2004parallel,Xu2009algebraic,Xu2017algebraic,Yue2020algebraic,Huang2022aSetup}, domain decomposition \cite{Hu2017domain,Yue2018substructuring} and their effective combination.


 In \cite{Xu2009algebraic}, a physical-variable based coarsening two-level (PCTL) iterative algorithm was proposed for solving the 3-T linear system \cref{eq:main}.
 Based on a specific coarsen strategy, the PCTL algorithm divides the fully coupled system into four individual easier-to-solve subsystems, and thus addresses the difficulties caused by the complicated couplings among physical quantities.
 Then, Zhou et al. \cite{Zhou2012an} tested the efficiency of the PCTL algorithm in practical applications and discussed the impact factor based on the numerical results.
 Recently, in order to further improve the efficiency of the PCTL algorithm for solving sequences of 3-T linear systems with dynamically and slowly changing features, Huang et al. proposed an $\alpha$Setup-PCTL algorithm \cite{Huang2022aSetup} by adaptively selecting the appropriate solution strategies for each linear system. These algorithms have been integrated into the parallel algebraic multigrid solver (JXPAMG) \cite{Xu2022jxpamg} and widely used in the simulation of practical applications. 
 Numerical results \cite{Xu2009algebraic,Zhou2012an,Huang2022aSetup} have shown the high efficiency and scalability of these PCTL-like algorithms both as solvers and as preconditioners for Krylov subspace methods when applied to the 3-T linear systems. 
 

 Note that the convergence and the efficiency of the PCTL algorithm are only observed from the numerical experiments and have not been theoretically analyzed yet. Actually, the PCTL method is a kind of algebraic two-grid (ATG) method, and thus general frameworks for analyzing the convergence of the ATG method \cite{Ruge1987algebraic,Stuben1999algebraic,Trottenberg2000multigrid,Falgout2004on,Falgout2005on,Notay2007convergence,MacLachlan2014theoretical} also apply to the PCTL method, which implies that the PCTL algorithm is convergent. However, the convergence factor of the PCTL algorithm is still lack of a quantitative estimation, which is what we concern. 
According to the block structure of the linear system \cref{eq:main} and the PCTL algorithm, we introduce a specific format of the smoothing property and the approximation property, and derive an upper bound on the convergence factor of the PCTL algorithm based on these two properties, which provides theoretical guarantee for solving the 3-T problems by the PCTL algorithm. 
Furthermore, we also discuss the factors affecting the convergence of the PCTL algorithm from two perspectives. One is directly analyzing from the upper bound, which is sharp but usually expensive to compute. And the other is measuring the convergence factor by some easier-to-compute matrix properties, such as the diagonally dominant strength of $A_{\alpha}$ and coupling strength of $A$. It further provides an insight into the problems for which the PCTL algorithm is efficient.   

The rest of this paper is organized as follows. In \cref{sec:pctl}, we introduce the three-temperature equations and the PCTL algorithm for completeness. Then the convergence properties of the PCTL algorithm are analyzed in detail in \cref{sec:pctl_converge}. Moreover, we also discuss the factors affecting the convergence of the PCTL algorithm in \cref{sec:infulence_factor}.
Finally, some conclusions are given in \cref{sec:conclusion}.

\section{3-T energy equations and the PCTL algorithm}\label{sec:pctl}

In this section, we first briefly introduce the derivation of the 3-T linear systems \cref{eq:main}, and then present the PCTL algorithm.

\subsection{3-T energy equations}\label{ssec:3T}

Consider the 3-T energy equations \cite{Yue2015an,Xu2017algebraic}:

\begin{equation}\label{eq:3T}
  \begin{cases}
    \frac{\partial E_r}{\partial t} + \nabla \cdot (-\frac{c \lambda(E_r)}{\kappa_r} \nabla E_r) = c \kappa_{p} (E_p - E_r) \vspace{2mm} \\ 

   \rho c_e \frac{\partial T_e}{\partial t} + \nabla \cdot (- \kappa_e T_e^{5/2} \nabla T_e) = -c \kappa_{p} (E_p - E_r) + \omega_{ei}(T_i - T_e) \vspace{2mm} \\

   \rho c_i \frac{\partial T_i}{\partial t} + \nabla \cdot (- \kappa_i T_i^{5/2} \nabla T_i) = - \omega_{ei}(T_i - T_e) \\
  \end{cases}
\end{equation}
where $c$ is the speed of light, $\lambda(E_r)$ is a nonlinear limiter, $\rho$ is the medium density, $\omega_{ei}$ is the electron-ion coupling coefficient, $E_r$ and $E_p$ are the radiation and the electron scattering energy densities, respectively, $T_e$ and $T_i$ are the electron and ion temperatures, respectively, $\kappa_r$ and $\kappa_p$ are the Rosseland and the Planck mean absorption coefficients, respectively, $\kappa_e$ and $\kappa_i$ denote the diffusion coefficients of electron and ion, respectively, 
 $c_e$ and $c_i$ are the electron and ion heat capacity, respectively.
The equations \cref{eq:3T} describe the transform of radiation energy in the medium, as well as the energy exchange processes. Moreover, $E_p$ and $E_r$ can be defined as
$$
E_p = 4 \sigma T_e^4/c  \quad  E_r = 4 \sigma T_r^4/c.
$$

For the discretization of the 3-T equations, it usually uses fully implicit schemes, followed by the frozen-in coefficients method for linearization in the temporal direction, and numerous methods such as finite volume method in the spatial direction, which leads to 3-T linear system \cref{eq:main} to be solved.

\subsection{PCTL algorithm}\label{ssec:PCTL}

Realizing that the coupling relations of the 3-T linear systems makes the classical AMG algorithm inapplicable, Xu et al. \cite{Xu2009algebraic} proposed a specific coarsening strategy based on the structure and the properties of the 3-T linear systems. Combined with the C/F block relaxation, the PCTL algorithm decouples the fully coupled 3-T linear system \cref{eq:main} into some individual scalar subsystems that are easier to solve, which is described as in \cref{alg:pctl}. Details of the PCTL algorithm can also refer to \cite{Huang2022aSetup}.

\begin{algorithm}
\caption{PCTL algorithm for the linear system \cref{eq:main}}
\label{alg:pctl}
\begin{algorithmic}[1]
\REQUIRE{Matrix $A$: $\mathbb{R}^{3n \times 3n} \rightarrow \mathbb{R}^{3n \times 3n}$, 
right-hand side $b \in \mathbb{R}^{3n }$, initial guess 
$x^{(0)} := \begin{pmatrix}  {x_r^{(0)}}^T   &{x_i^{(0)}}^T  &{x_e^{(0)}}^T  \end{pmatrix}^T$,
 and the stop tolerance $\epsilon$.}
\ENSURE{Approximate solution $x$ fulfilling $\| b - Ax \|_2 / \| b \|_2 \leq \epsilon$.}
\STATE{{\bf Setup phase:} construct the interpolation operator $P = (P_r^T \, P_i^T \, I)^T$ and the restriction operator $R=P^T$, and then compute the associated coarse-level operator $A_c:=P^T A P$}
\STATE{ {\bf Solve phase: }while $\| b - Ax \|_2 / \| b \|_2 > \epsilon$ do
       \begin{itemize}
        \item[2.1] Pre-smoothing: do  C/F block smoothing
                \begin{equation*}
                    \begin{aligned}
                      x_r^{(k+1/3)} &= A_r^{-1} (b_r - D_{er}^T x_e^{(k)})\\
                      x_i^{(k+1/3)} &= A_i^{-1} (b_i - D_{ei}^T x_e^{(k)})\\
                      x_e^{(k+1/3)} &= A_e^{-1} (b_e - D_{er} x_r^{(k+1/3)}-D_{ei} x_i^{(k+1/3)})  
                  \end{aligned}
                \end{equation*} 
        \item[2.2] Coarse-grid solver: 
                  \begin{equation*}
                    A_c v_c = r_c = P^T (b-Ax^{(k+1/3)});
                  \end{equation*} 
       \item[2.3] Coarse-grid correction: 
                \begin{equation*}
                  \begin{aligned}
                    x_e^{(k+2/3)} &= x_e^{(k+1/3)} + v_c\\
                    x_r^{(k+2/3)} &= x_r^{(k+1/3)} + P_r v_c\\
                    x_i^{(k+2/3)} &= x_i^{(k+1/3)} + P_i v_c
                 \end{aligned}
                \end{equation*} 
       \item[2.4] Post-smoothing: do  F/C block smoothing
                 \begin{equation*}
                   \begin{aligned}
                    x_e^{(k+1)} &= A_e^{-1} (b_e - D_{er} x_r^{(k+2/3)}-D_{ei} x_i^{(k+2/3)})\\
                    x_r^{(k+1)} &= A_r^{-1} (b_r - D_{er}^T x_e^{(k+1)})\\
                    x_i^{(k+1)} &= A_i^{-1} (b_i - D_{ei}^T x_e^{(k+1)})
                    \end{aligned}
                  \end{equation*} 
       \end{itemize}
}
\end{algorithmic} 
\end{algorithm}


In the PCTL algorithm, the interpolation operator is selected as
$$
P = (P_r^T, P_i^T, I)^T.
$$
Note that in this case, the ideal interpolation operator is $P_{\alpha} = P_{\alpha}^{ex} := - A_{\alpha}^{-1}D_{\alpha e}$ $(\alpha = r,i)$, which is often dense and expensive to compute. Therefore, in order to save cost and to ensure the coarse-level operator $A_c:=P^T A P$ and the matrix $A_e$ have the same structure, the interpolation operators $P_{\alpha}$ in the PCTL algorithm are restricted to diagonal matrix. Moreover, it also satisfies
\begin{equation*}
  P_{\alpha} {\bf 1} = P_{\alpha}^{ex} {\bf 1} \quad (\alpha = r,i),
\end{equation*}
 where ${\bf 1} = (1,1,\cdots,1)^T \in \mathbb{R}^n$.

\section{Convergence of the PCTL algorithm}\label{sec:pctl_converge}

Throughout the paper we consider real matrices and adopt the following notations. Given a $n \times n$ matrix $A$, we use $\lambda(A)$, $\lambda_{min}(A)$, $\lambda_{max}(A)$ and $\rho(A)$ to represent the eigenvalues, the minimum eigenvalue, the maximum eigenvalue and the spectral radius of the matrix $A$.
Moreover, when the matrix $A$ is symmetric and positive definite, 
the A-norm or energy norm is defined by $\|x\|_A^2 = x^T A x$ with $x \in \mathbb{R}^n$, and the corresponding induced matrix norm is defined by $\|B\|_A = \max_{x \in \mathbb{R}^n, \|x\|_A = 1} \|Bx\|_A$. 

In this section, we aim to characterize the specific convergence properties of the PCTL algorithm, expecting to be helpful for further research, such as improving its efficiency and analyzing for which problems it is of high efficiency.
Before that, we first introduce some study on the convergence estimation of the ATG method.


\subsection{Convergence of the ATG method}\label{ssec:convTG}

The ATG method is composed by smoothing process and coarse-grid correction process. In general, the smoothing process works well at eliminating oscillatory errors and poorly at eliminating algebraically smooth errors, while the coarse-grid correction process follows to compensate it and to further reduce algebraically smooth errors, such that all errors could be quickly reduced. 

Consider the symmetric two-grid scheme, which is the simplest but the most representative scheme. Denote the pre- and post-smoother by $G_1:=I-M^{-T}A$ and $G_2:=I-M^{-1}A$, respectively, and denote the coarse-grid correction error propagator by $T:= I - P A_c^{-1} P^T A$, where $P$ is the interpolation operator and  $A_c:=P^T A P$ is the coarse-grid matrix constructed by the Galerkin strategy, then the error propagation matrix of the resulting two-grid method reads
\begin{equation*}
  E_{ATG} = G_2 T G_1 = (I-M^{-1}A)(I-P (P^T A P)^{-1} P^T A)(I-M^{-T}A).
\end{equation*}

The convergence theory of the ATG method mainly focuses on characterizing the (energy) norm of the error propagation matrix, that is $\|E_{ATG}\|_A$, and the study on it has been well developed \cite{Ruge1987algebraic,Stuben1999algebraic,Trottenberg2000multigrid,Falgout2005on,Notay2010algebraic}. In particular, the references \cite{McCormick1982multigrid,McCormick1985multigrid,Brandt1986algebraic,Mandel1988algebraic} have laid the foundation for numerous classical algebraic theoretical analysis of the ATG method. In recent decades, many universal convergence frameworks have been emerged for the exact or inexact AMG method and for symmetric or non-symmetric problems \cite{Falgout2004on,Falgout2005on,Notay2007convergence,Notay2010algebraic,Xu2022convergence,Xu2022a}, especially that the convergence factor $\|E_{ATG}\|_A$ of the two-grid method  has even be characterized by an elegant identity \cite{Falgout2005on}. 
However, the elegant identity is often impractical for its expensive computational cost, 
thus most researches turn to measuring the convergence rate of the ATG method by
finding a sharp upper bound on $\|E_{ATG}\|_A$ \cite{Ruge1987algebraic,Stuben1999algebraic,Trottenberg2000multigrid,MacLachlan2014theoretical}. 
A widely known strategy is translating the estimation into some sufficient conditions on the smoothing operator and coarse-grid correction operator, which are known as {\bf smoothing property} and {\bf weak approximation property} as described in \cref{lem:uniconverge1}. 

\begin{lemma} \label{lem:uniconverge1} \cite{MacLachlan2014theoretical}
 Assume $A$ is an SPD matrix, the interpolation operator $P$ is full rank, and post-smoother  $G_2=I - M^{-1} A$ is A-norm convergent. If there exist $\alpha_1, \beta_1>0$ independently of $e$ such that
          \begin{equation} \label{eq:post_converge}
             \begin{aligned}
                || G_2 e ||_A^2 & \leq ||e||_A^2 - \alpha_1 g(e)   
                \quad ({\rm post-smoothing \,\, property}) \\
                ||T e||_A^2 & \leq \beta_1 g(Te)    
                \quad ({\rm weak \,\, approximation \,\, property}),
             \end{aligned}
           \end{equation}
           where $g(e)$ is any non-negative function. Then the convergence factor of the ATG method satisfies
           \begin{equation}\label{eq:post_bound}
            \|G_2 T\|_A \leq \sqrt{1-\alpha_1/\beta_1}
           \end{equation}
           and thus
           \begin{equation}\label{eq:boundTG}
            \|E_{ATG}\|_A  = \| G_2T \|_A^2 \leq 1-\alpha_1/\beta_1.
           \end{equation}
\end{lemma}

Although this estimation leads to a certain loss of sharpness, it enables the effects of the smoothing process and coarse-grid correction process on the performance of the ATG algorithm more intuitive, 
and also  helps to make wise choices about  the components of an ATG algorithm for particular problems. 
In addition, it is worthy to note that different choices of the function $g(e)$ in \cref{eq:post_converge} will lead to different formats of the smoothing property and approximation property, and the readers can refer to \cite{MacLachlan2014theoretical} for more details.


\subsection{Convergence properties of the PCTL algorithm}

Although \cref{thm:tg_identity} states that the PCTL algorithm is convergent, a reasonable estimation of its convergence rate is  much significant and  beneficial, since it is helpful for evaluating the efficiency of the PCTL algorithm and for prejudging which problems the PCTL algorithm is of high efficiency. Thus in this subsection,  we choose a specific function $g(e)$ based on the block-structure of the PCTL algorithm and derive an upper bound for the convergence factor of the PCTL algorithm. 

First, we introduce some notations used in the following discussion,
\begin{equation*}
  \begin{aligned}
      A_{\alpha} &= (a_{kj}^{\alpha})_{n \times n}, \quad  A_{\alpha}^{-1} = (b_{kj}^{\alpha})_{n \times n}, \quad (\alpha=r,i,e)\\
     D_{er} &= {\rm diag} (d_{1}^r, \cdots, d_{n}^r), \quad D_{ei} = {\rm diag} (d_{1}^i, \cdots, d_{n}^i)\\
    P_{r} &= {\rm diag} (p_{1}^r, \cdots, p_{n}^r), \quad P_{i} = {\rm diag} (p_{1}^i, \cdots, p_{n}^i).
  \end{aligned}
\end{equation*}
Then the convergence factor of the PCTL algorithm is
\begin{equation*}
  ||E_{PCTL}||_A = ||((I-M^{-1}A)(I-P (P^T A P)^{-1} P^T A))||_A,
\end{equation*}
where the post-smoothing operator $G_2 = I-M^{-1}A$ is with
\begin{equation} \label{eq:smooth}
  M = \begin{bmatrix} 
    A_r &  0   & D_{er}^T \\
    0   & A_i  & D_{ei}^T \\
    0  & 0  & A_e 
  \end{bmatrix},
\end{equation}
and the interpolation operator $P = (P_r^T, P_i^T, I)^T$ is with
\begin{equation*}
  P_r = 
  \begin{bmatrix} 
    p_1^r &   &   \\ 
      & \ddots  & \\
      &  &  p_n^r 
  \end{bmatrix} = 
  \begin{bmatrix} 
    - \sum_{j=1}^n b_{1j}^r d_j^r &   &   \\ 
      & \ddots  & \\
      &  &  - \sum_{j=1}^n b_{nj}^r d_j^r 
  \end{bmatrix} 
\end{equation*}
and
\begin{equation*}
  P_i = 
  \begin{bmatrix} 
    p_1^i &   &   \\ 
      & \ddots  & \\
      &  &  p_n^i 
  \end{bmatrix}
  =
  \begin{bmatrix} 
    - \sum_{j=1}^n b_{1j}^i d_j^i &   &   \\ 
      & \ddots  & \\
      &  &  - \sum_{j=1}^n b_{nj}^i d_j^i 
  \end{bmatrix}.
\end{equation*}
 
Noticing that the smoothing operator and the interpolation operator in the PCTL algorithm are both of block structure, and recalling the symmetry and positive definitiveness of the matrices $A$, $A_r$, $A_i$ and $A_e$, we choose the function $g(e)$ in \cref{lem:uniconverge1}
as 
$$g(e):= \|e\|_{A \mathcal{D}^{-1} A}^2,$$ 
where 
\begin{equation}\label{eq:D}
  \mathcal{D} := 
\begin{bmatrix} 
  A_r &  0   & 0 \\
  0   & A_i  & 0 \\
  0 & 0  & A_e 
\end{bmatrix},
\end{equation}
then we get the following convergence conclusion.

\begin{theorem}\label{thm:converge1}
 For the PCTL algorithm, if there exist $\alpha, \beta>0$ independently of $e$ such that
          \begin{equation} \label{eq:post_converge1}
                || G_2 e ||_A^2 \leq ||e||_A^2 - \alpha  \|e\|_{A \mathcal{D}^{-1} A}^2 
          \end{equation}
          and
          \begin{equation} \label{eq:post_converge2}
                ||T e||_A^2 \leq \beta \|Te\|_{A \mathcal{D}^{-1} A}^2,
           \end{equation}
           where $\mathcal{D}$ is defined as in \cref{eq:D}, then the convergence factor of the PCTL algorithm satisfies
           \begin{equation}\label{eq:post_bound1}
            ||E_{PCTL}||_A \leq 1-\alpha/\beta.
           \end{equation}
\end{theorem}

It worth noting that the estimation in \cref{thm:converge1} consists with the special case $D_{er}=0$ and $D_{ei} =0$, that is there is no energy exchange between the three quantities. In this case, the PCTL algorithm degenerates into the direct method and thus $\|E_{PCTL}\|_A =0$.
On the other hand, from \cref{thm:converge1} (or from \cref{eq:smooth_property_2} and \cref{eq:approx_property_2}) it can be easily proved that $\alpha=1$ and $\beta=1$, which also yields $\|E_{PCTL}\|_A =0$. Whereas, taking the known choice $g(e) = \|e\|_{AD^{-1}A}$ ($D$ is the diagonal matrix of $A$) in \cref{lem:uniconverge1} can not leads to $\|E_{PCTL}\|_A =0$ in this case. That is one reason why we choose $g(e) = \|e\|_{A \mathcal{D}^{-1} A} $.
\begin{theorem}\label{thm:conv_pctl_special}
  Suppose there is no energy exchange between the three quantities, that is $D_{er}=0$ and $D_{ei} =0$, then the PCTL algorithm satisfies the post-smoothing property and the approximation property in \cref{thm:converge1} with
  $$
   \alpha = \beta =1,
  $$
  and then 
  $$
  \|E_{PCTL}\|_A =0.
  $$
\end{theorem}

Therefore, the \cref{thm:converge1} gives a feasible framework for assessing the convergence rate of the PCTL algorithm. 
In applications, there are some easier-to-compute and more intuitive substitutes for the smoothing and approximation properties, such as [\cite{Ruge1987algebraic}, Lemma 4.1 and Theorem 5.2] for $g(e) = \|e\|_{AD^{-1}A}$. Thus, we derive the similar substitutes for the post-smoothing property \cref{eq:post_converge1} and the weak approximation property \cref{eq:post_converge2}, which are described as in \cref{lem:smooth_property} and \cref{lem:uniconverge_tg}, respectively.                                          

\begin{lemma}\label{lem:smooth_property} 
  For the PCTL algorithm, if there exist $\alpha>0$ such that
  \begin{equation}\label{eq:smooth_property_2}
      \alpha M^T \mathcal{D}^{-1} M \leq M + M^T -A, 
  \end{equation}
 then the post-smoothing property \cref{eq:post_converge1} holds, where $A_1 \leq A_2$ represents the matrix $A_2-A_1$ is symmetric and positive semi-definite (SPSD).
\end{lemma}

\begin{lemma} \label{lem:uniconverge_tg} 
  For the PCTL algorithm, if there exist $\beta>0$ independent of $e$ such that 
  \begin{equation} \label{eq:approx_property_2}
    ||e - PSe||_{\mathcal{D}}^2 \leq \beta ||e||_A^2,  \quad \forall e
  \end{equation}
  then the weak approximation property \cref{eq:post_converge2} holds,
where $S=\begin{bmatrix} 0_n  & 0_n  & I_n \end{bmatrix}$, $0_n  \in \mathbb{R}^{n \times n}$ and $I_n  \in \mathbb{R}^{n \times n}$ are the matrices with all elements be zero and one, respectively.
\end{lemma}

In the following, we consider estimation of the convergence factor of the PCTL algorithm in the general case that the matrices $D_{er}$ and $D_{ei}$ are non-singular.
Before it, we firstly characterize some spectral properties of a matrix, which play an important role in our proof.

\begin{lemma} \label{lem:eig} \cite{Saad2003iterative}
  Suppose that matrices $A \in \mathbb{R}^{n \times n}$ and $B \in \mathbb{R}^{n \times n}$ are symmetric and positive definite, then there hold
  \begin{equation*}
    \lambda_{min}(A^{-1}B) \leq \frac{v^T B v}{v^T A v} \leq \lambda_{max}(A^{-1}B), \quad \forall v \in \mathbb{R}^{n}
  \end{equation*}
  and 
  $$
   \lambda_{min}(A^{-1}B) = \frac{1}{\lambda_{max}(B^{-1}A)}, \quad 
   \lambda_{max}(A^{-1}B) = \frac{1}{\lambda_{min}(B^{-1}A)}.
  $$
\end{lemma}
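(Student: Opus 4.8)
The plan is to reduce the statement to the spectral theorem for symmetric matrices by symmetrizing $A^{-1}B$. Since $A$ is SPD it admits a unique SPD square root $A^{1/2}$, and $A^{-1}B$ is similar to $S := A^{-1/2} B A^{-1/2}$ through $A^{1/2}(A^{-1}B)A^{-1/2} = A^{-1/2}BA^{-1/2}$. The matrix $S$ is symmetric, and it is positive definite because $w^T S w = (A^{-1/2}w)^T B (A^{-1/2}w) > 0$ for every $w \neq 0$. Consequently $A^{-1}B$ has real, strictly positive eigenvalues, which coincide with those of $S$; in particular $\lambda_{min}(A^{-1}B)$ and $\lambda_{max}(A^{-1}B)$ are well defined and positive.

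For the first chain of inequalities I would apply the Rayleigh--Ritz characterization to the symmetric matrix $S$: for every $w \in \mathbb{R}^n \setminus \{0\}$ one has $\lambda_{min}(S) \le (w^T S w)/(w^T w) \le \lambda_{max}(S)$. Given an arbitrary $v \in \mathbb{R}^n \setminus \{0\}$, set $w = A^{1/2}v \neq 0$; then $w^T S w = v^T B v$ and $w^T w = v^T A v$, and since $\lambda_{min}(S) = \lambda_{min}(A^{-1}B)$ and $\lambda_{max}(S) = \lambda_{max}(A^{-1}B)$, the asserted bounds follow (the case $v = 0$ being trivial). For the reciprocal identities, note that $(A^{-1}B)^{-1} = B^{-1}A$, so the eigenvalues of $B^{-1}A$ are exactly the reciprocals of those of $A^{-1}B$; as all of them are positive, the smallest eigenvalue of $A^{-1}B$ pairs with the largest of $B^{-1}A$ and vice versa, which yields $\lambda_{min}(A^{-1}B) = 1/\lambda_{max}(B^{-1}A)$ and $\lambda_{max}(A^{-1}B) = 1/\lambda_{min}(B^{-1}A)$.

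There is essentially no obstacle here; the only point that needs care is that $A^{-1}B$ is not itself symmetric, so the spectral theorem cannot be invoked directly. The symmetrization via $A^{1/2}$ circumvents this, and the remainder is bookkeeping. Equivalently, one could argue from the simultaneous diagonalization of the generalized eigenvalue problem $Bv = \lambda A v$, i.e. the existence of an $A$-orthonormal basis of eigenvectors, which is in fact the form in which this lemma will be used in the convergence analysis of the PCTL algorithm.
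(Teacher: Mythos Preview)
Your proof is correct and is the standard argument: symmetrize $A^{-1}B$ via the SPD square root $A^{1/2}$, apply Rayleigh--Ritz to the symmetric matrix $S=A^{-1/2}BA^{-1/2}$, and then read off the reciprocal relations from $(A^{-1}B)^{-1}=B^{-1}A$. The paper does not supply its own proof of this lemma; it is quoted from \cite{Saad2003iterative} and used as a black box, so there is nothing further to compare.
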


\begin{lemma} \label{lem:Gerschgorin} {({\bf Gerschgorin Disk Theorem})} \cite{Varga1962matrix}
  Let $A=(a_{ij})_{n \times n}$ be an arbitrary complex matrix, and let 
  $$
  \Lambda_i := \sum_{j=1, j \neq i}^{n} |a_{ij}|, \quad 1 \leq i \leq n,
  $$
  where $\Lambda_1:=0$ if $n=1$. If $\lambda$ is an eigenvalue of $A$, then there is a positive integer $r$, with $1 \leq r \leq n$, such that 
  $$
  |\lambda - a_{rr}| \leq \Lambda_r.
  $$
  Hence, all eigenvalues $\lambda$ of $A$ lie in the union of the disks
  $$
  |z - a_{ii}| \leq \Lambda_i, \quad 1 \leq i \leq n.
  $$
\end{lemma}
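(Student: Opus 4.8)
The plan is to run the classical largest-component argument on an eigenvector. Let $\lambda$ be an eigenvalue of $A$ and let $x = (x_1,\dots,x_n)^T \neq 0$ be an associated eigenvector, so that $Ax = \lambda x$. First I would pick an index $r$ with $|x_r| = \max_{1\le i \le n} |x_i|$; because $x$ is nonzero this maximum is strictly positive, hence $x_r \neq 0$, and this is the only point in the proof where care is actually needed.

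Next I would extract the $r$-th equation of $Ax = \lambda x$, namely $\sum_{j=1}^n a_{rj} x_j = \lambda x_r$, and move the diagonal contribution to the left-hand side to obtain $(\lambda - a_{rr}) x_r = \sum_{j \neq r} a_{rj} x_j$. Applying the triangle inequality together with the bound $|x_j| \le |x_r|$ for every $j$ gives $|\lambda - a_{rr}|\,|x_r| \le \sum_{j\neq r} |a_{rj}|\,|x_j| \le |x_r| \sum_{j\neq r} |a_{rj}| = |x_r|\,\Lambda_r$. Dividing by $|x_r|>0$ yields $|\lambda - a_{rr}| \le \Lambda_r$, which establishes the existence of the required index $r \in \{1,\dots,n\}$. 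Since this $r$ lies in $\{1,\dots,n\}$, the eigenvalue $\lambda$ is contained in the disk $|z - a_{rr}| \le \Lambda_r$, and therefore in the union $\bigcup_{i=1}^{n} \{\, z : |z - a_{ii}| \le \Lambda_i \,\}$, which is the second assertion.

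There is essentially no obstacle here: the argument uses nothing beyond the defining relation $Ax = \lambda x$, the triangle inequality, and the normalization choice of the component of largest modulus, and it applies verbatim to an arbitrary complex matrix, with no symmetry, positivity, or diagonal-dominance hypotheses. The degenerate case $n = 1$ is covered by the convention $\Lambda_1 := 0$, which forces $\lambda = a_{11}$. In the sequel this lemma will presumably be combined with \cref{lem:eig} to localize the spectra of the various SPD blocks appearing in the PCTL error analysis; the only feature worth emphasizing is that the bound is phrased in terms of the off-diagonal row sums $\Lambda_i$, which interface naturally with the strong diagonal dominance assumed on $A$ in \cref{eq:main}.
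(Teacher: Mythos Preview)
Your argument is correct and is precisely the classical largest-component proof of the Gerschgorin disk theorem; the paper does not supply its own proof of this lemma but simply quotes it from \cite{Varga1962matrix}, so there is nothing to compare against. One minor remark on your closing speculation: in the paper the lemma is not combined with \cref{lem:eig} but is used on its own in the proof of \cref{thm:approx_pctl}, to conclude that the symmetric matrix $Q$ is positive semidefinite once it has been shown to be diagonally dominant with positive diagonal entries.
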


Now, we begin to discuss the convergence properties of the PCTL algorithm in detail. Specifically, we first show it satisfies the post-smoothing property  in \cref{thm:post_smooth_pctl}, and then prove it satisfies the approximation property in \cref{thm:approx_pctl}. Finally, we derive an estimation on the convergence factor of the PCTL algorithm in \cref{thm:converge_pctl}.

\begin{theorem}\label{thm:post_smooth_pctl}
  For the 3-T linear system \cref{eq:main}, the PCTL algorithm satisfies the post-smoothing property  $|| G_2 e ||_A^2 \leq ||e||_A^2 - \alpha  \|e\|_{A \mathcal{D}^{-1} A}$ with $\alpha$ be defined as
  \begin{equation} \label{eq:post_smooth_pctl}
    \alpha=\frac{\rho_s+2-\sqrt{\rho_s^2+4\rho_s}}{2},
  \end{equation}
  where $\rho_s$ is the spectral radius of the matrix $A_e^{-1}(D_{er}A_r^{-1}D_{er}^T + D_{ei}A_i^{-1}D_{ei}^T)$.
\end{theorem}

\begin{proof}
As stated in \cref{lem:smooth_property}, to prove the post-smoothing property of the PCTL algorithm, it is equivalent to prove that there exists a parameter $\alpha>0$ (as big as possible) such that 
  \begin{equation}\label{eq:post_smooth_pctl2}
    \alpha M^T \mathcal{D}^{-1} M \leq M+M^T-A,  
  \end{equation}
where
\begin{equation*}
  M+M^T-A =  
  \begin{bmatrix} 
    A_r & 0   & 0\\
    0  & A_i   & 0\\
    0  & 0 & A_e
 \end{bmatrix}
\end{equation*}
and 
\begin{equation*}
  M^T \mathcal{D}^{-1} M = 
\begin{bmatrix} 
  A_r & 0   &  D_{er}^T\\
 0  & A_i   & D_{ei}^T \\
 D_{er}  & D_{ei}   & A_e + D_{er}A_r^{-1}D_{er}^T + D_{ei}A_i^{-1}D_{ei}^T
\end{bmatrix}
,
\end{equation*}
which is further equivalent to that the matrix
\begin{equation}\label{eq:H}
  \begin{aligned}
    H &:= (M+M^T-A) - \alpha M^T \mathcal{D}^{-1} M \\
      &= \begin{bmatrix} 
        (1-\alpha) A_r & 0   &  -\alpha D_{er}^T\\
       0  & (1-\alpha) A_i   & -\alpha D_{ei}^T \\
       -\alpha D_{er}  & -\alpha D_{ei}   & (1-\alpha)A_e -\alpha (D_{er}A_r^{-1}D_{er}^T + D_{ei}A_i^{-1}D_{ei}^T)
      \end{bmatrix}
  \end{aligned}
\end{equation} 
is symmetric and positive semi-definite. It is easy to prove that $\alpha \leq 1$ and ``=" holds if and only if $D_{er} = D_{ei} =0$, which is discussed in \cref{thm:conv_pctl_special}. In the following, we only consider $\alpha < 1$, and thus the positive semi-definite property of $H$ just needs its Schur complement matrix
\begin{equation}\label{eq:H1}
  \begin{aligned}
    H_1 &:= (1-\alpha)A_e -\alpha (D_{er}A_r^{-1}D_{er}^T + 
            D_{ei} A_i^{-1}D_{ei}^T) - \frac{\alpha^2}{1-\alpha} (D_{er}A_r^{-1}D_{er}^T + D_{ei}A_i^{-1}D_{ei}^T) \\
      &= (1-\alpha)A_e - \frac{\alpha}{1-\alpha} (D_{er}A_r^{-1}D_{er}^T + D_{ei}A_i^{-1}D_{ei}^T)
  \end{aligned}
\end{equation} 
to be SPSD.

Note that the coefficient matrix $A$ is SPD, thus it can be checked that the matrix
$$
S_e:=A_e - D_{er}A_r^{-1}D_{er}^T - D_{ei}A_i^{-1}D_{ei}^T
$$
is also symmetric and positive definite.
Denote
$$
\hat{S}_e := A_e^{-1}(D_{er}A_r^{-1}D_{er}^T + D_{ei}A_i^{-1}D_{ei}^T)
$$
and 
$$\rho_s = \lambda_{max}(\hat{S}_e)$$
be the spectral radius of the matrix $\hat{S}_e$. Then according to the positive definitiveness of $S_e$ and the conclusions in Lemma \ref{lem:eig}, we could derive 
$$
\rho_s < 1
$$
and 
$$
 1-\alpha \geq \frac{\alpha}{1-\alpha} \rho_s.
$$
In conclusions, the PCTL algorithm satisfies the post-smoothing property \cref{eq:post_converge1} with
$$
\alpha = \frac{\rho_s+2-\sqrt{\rho_s^2+4\rho_s}}{2}.
$$
The proof is completed.
\end{proof}

Next, we prove the weak approximation property of the PCTL algorithm.

\begin{theorem}\label{thm:approx_pctl}
  For the 3-T linear system \cref{eq:main} with the coupling terms $D_{er}$ and $D_{ei}$ be non-singular, then the PCTL algorithm satisfies the following weak approximation property
  $
    ||e - PSe||_{\mathcal{D}}^2 \leq \beta ||e||_A^2
  $
  with 
  \begin{equation} \label{eq:Beta}
    \beta = \frac{(\rho_1 -2)\rho_s +1 }{1-\rho_s},
  \end{equation}
 where $\rho_s$ is the spectral radius of the matrix $A_e^{-1}(D_{er}A_r^{-1}D_{er}^T + D_{ei}A_i^{-1}D_{ei}^T)$ and 
 $$\rho_1 = \max \{ \frac{1}{\lambda_{min}^2(P_r^{-1}(-A_r^{-1}D_{er}))}, \, \frac{1}{\lambda_{min}^2(P_i^{-1}(-A_i^{-1}D_{ei}))} \}. $$
\end{theorem}
\begin{proof}

  In the PCTL algorithm, the two sides of the inequality  \cref{eq:approx_property_2} are 
  \begin{equation*}
    \|e\|_A^2 = 
    \begin{bmatrix} e_r\\ e_i \\e_e \end{bmatrix}^T 
    \begin{bmatrix} A_r & 0  & D_{er}^T  \\ 0 &A_i & D_{ei}^T \\
    D_{er}  &  D_{ei}  &A_e  \end{bmatrix} 
    \begin{bmatrix} e_r \\ e_i \\  e_e \end{bmatrix},
\end{equation*}
and
  \begin{equation*}
      ||e - PSe||_{\mathcal{D}}^2 = 
      \begin{bmatrix} e_r\\ e_i \\e_e \end{bmatrix}^T
      \begin{bmatrix} A_r  &0   &-A_r P_r\\  0  &A_i  & -A_i P_i \\
      -P_r A_r  & -P_i A_i  & P_r A_r P_r + P_i A_i P_i \end{bmatrix} 
      \begin{bmatrix} e_r\\ e_i \\e_e \end{bmatrix},
  \end{equation*}
  respectively.
Then, the weak approximation property $||e - PSe||_{\mathcal{D}}^2 \leq \beta ||e||_A^2$ is equivalent to that
\begin{equation*}
  \begin{aligned}
    Q := & \beta \begin{bmatrix} A_r & 0  & D_{er}^T  \\ 0 &A_i & D_{ei}^T \\
        D_{er}  &  D_{ei}  &A_e  \end{bmatrix} -
       \begin{bmatrix} A_r  &0   &-A_r P_r\\  0  &A_i  & -A_i P_i \\
         -P_r A_r  & -P_i A_i  & P_r A_r P_r + P_i A_i P_i \end{bmatrix}  \\
     =&  \begin{bmatrix} (\beta-1) A_r  & 0  & \beta D_{er}^T + A_r P_r \\
         0 & (\beta-1) A_i & \beta D_{ei}^T + A_i P_i\\
      \beta D_{er} + P_r A_r  & \beta D_{ei} + P_i A_i  & \beta A_e -(P_r A_r P_r + P_i A_i P_i) \end{bmatrix} 
  \end{aligned}
\end{equation*}
is a symmetric and positive semi-definite matrix. It is easy to prove that $\beta \geq 1$ and ``=" holds if and only if $D_{er} = D_{ei} =0$, which is discussed in \cref{thm:conv_pctl_special}. In the following, we only consider $\beta > 1$ and $D_{er}$ and $D_{ei}$ are non-singular, and then the positive semi-definite property of $Q$ just needs its block Schur complement matrix
\begin{equation}\label{eq:SQ1}
  \begin{aligned}
    S_Q :=& [\beta A_e - (P_r A_r P_r + P_i A_i P_i)] - \frac{1}{\beta-1} (\beta D_{er} + P_r  A_r)A_r^{-1} (\beta D_{er}^T + A_r P_r)  \\
    &- \frac{1}{\beta-1} (\beta D_{ei} + P_ i A_i)A_i^{-1} (\beta D_{ei}^T + A_i P_i) \\
      =& \beta A_e + \frac{2 \beta}{\beta-1} (P_r(-D_{er}) + P_i (-D_{ei})) - \frac{\beta}{\beta-1}(P_r A_r P_r + P_i A_i P_i) \\
      & - \frac{\beta^2}{\beta-1}(D_{er} A_r^{-1} D_{er} + D_{ei} A_i^{-1} D_{ei})
  \end{aligned}
\end{equation} 
to be SPSD.

For $\alpha=r,i$, denote
$$
L_{\alpha}:= (P_{\alpha})^{-1}(-A_{\alpha}^{-1}D_{e \alpha}).
$$
Taking $\alpha=r$ as an example, Lemma  \ref{lem:eig} shows that for any $x \in \mathbb{R}^n$, there holds 
\begin{equation}\label{eq:eigL1}
  \lambda_{min}(L_r) (x^T (-P_r D_{e r}) x) \leq x^T D_{er} A_{r}^{-1} D_{er} x \leq 
  \lambda_{max}(L_r) (x^T (-P_r D_{er}) x)
\end{equation}
and 
\begin{equation}\label{eq:eigL2}
  \lambda_{min}^2(L_r) (x^T P_r A_r P_r x) \leq x^T D_{er} A_r^{-1} D_{er} x \leq 
  \lambda_{max}^2(L_r) (x^T P_r A_r P_r x).
\end{equation}
On the other hand, the Gerschgorin Disk Theorem in \cref{lem:Gerschgorin} shows that for any eigenvalue $\lambda(L_r)$ of $L_r$, there holds
$$
 0 < \lambda(L_r) \leq 1.
$$
Moreover, the condition $P_{r} {\bf 1} = (-A_r^{-1}D_{er}) {\bf 1}$  further gives
$$
\lambda_{max}(L_r) =1.
$$
Thus, the inequalities \cref{eq:eigL1} and \cref{eq:eigL2} leads to
$$
x^T D_{er} A_{r}^{-1} D_{er} x \leq x^T (-P_r D_{er}) x
$$
and
$$
 x^T P_r A_r P_r x \leq \frac{1}{\lambda_{min}^2(L_r)} (x^T D_{er} A_r^{-1} D_{er} x),
$$
such that if we could prove that the matrix
$$
\begin{aligned}
  S :=& \beta A_e + \frac{2 \beta}{\beta-1} (D_{er} A_r^{-1} D_{er} + D_{ei} A_i^{-1} D_{ei}) - \frac{\beta}{(\beta-1) \lambda_{min}^2(L_{r})} D_{er} A_r^{-1} D_{er} - \\
  & \frac{\beta}{(\beta-1) \lambda_{min}^2(L_{i})} D_{ei} A_i^{-1} D_{ei}
     - \frac{\beta^2}{\beta-1}(D_{er} A_r^{-1} D_{er} + D_{ei} A_i^{-1} D_{ei})
\end{aligned}
$$
is SPSD, the positive semi-definite property of the matrix $S_Q$ is obtained.

Furthermore, denote
$$
\rho_1 = \max \{\frac{1}{\lambda_{min}^2(L_{r})}, \,  \frac{1}{\lambda_{min}^2(L_{i})} \} >1.
$$
Then if the matrix
$$
  S_1 := \beta A_e -\frac{\beta^2 + \rho_1 \beta - 2\beta}{\beta-1} (D_{er} A_r^{-1} D_{er} + D_{ei} A_i^{-1} D_{ei}) 
$$
is proved to be SPSD, the positive semi-definite property of the matrix $S$ follows. Recall that the matrix $\hat{S}_e = A_e^{-1}(D_{er}A_r^{-1}D_{er}^T + D_{ei}A_i^{-1}D_{ei}^T)$ is SPD and its spectral radius $\rho_s <1$, then the positive semi-definite property of $S_1$ is satisfied if
$$
\rho_s \leq \frac{\beta-1}{\beta + \rho_1 - 2},
$$
which further gives
$$
\beta \geq \frac{(\rho_1 -2)\rho_s +1 }{1-\rho_s}.
$$

Thus, the PCTL algorithm satisfies the weak approximation property \cref{eq:post_converge2} with
$$
\beta = \frac{(\rho_1 -2)\rho_s +1 }{1-\rho_s}.
$$
The proof is completed.
\end{proof}

Based on the \cref{thm:converge1}, an estimation for the convergence factor of the PCTL algorithm can be obtained.

\begin{theorem}\label{thm:converge_pctl}
  For the 3-T linear system \cref{eq:main} with the coupling terms $D_{er}$ and $D_{ei}$ be non-singular, the convergence factor of the PCTL algorithm satisfies 
  $$
  \|E_{PCTL}\|_A \leq \kappa:= \frac{\rho_s^2 + (2\rho_1 -3)\rho_s + (1-\rho_s) \sqrt{\rho_s^2 + 4 \rho_s}}{2(\rho_1-2)\rho_s +2},
  $$
  where $\rho_s$ and $\rho_1$ are defined as in \cref{thm:approx_pctl}.
\end{theorem}

\section{Analysis of the factors affecting the efficiency of PCTL algorithm} \label{sec:infulence_factor}

In this section, we discuss the factors affecting the convergence of the PCTL algorithm from two aspects. One is directly analyzing from the bound $\kappa$ and shows the impacts of the parameters $\rho_s$ and $\rho_1$. The other intends to study the influences of matrix properties on the convergence of PCTL algorithm, which gives a practical scheme to measure the convergence speed. It is expected that the discussions could provide guidances for further research on the PCTL algorithm, such as which kind of problems is the PCTL algorithm effective for and how to improve its efficiency. 

\subsection{Impacts of the parameters $\rho_s$ and $\rho_1$  on the efficiency of PCTL algorithm}

In order to visualize the influences of the parameters $\rho_s$ and $\rho_1$  on the convergence of the PCTL algorithm, we plot the curves of the upper bound $\kappa$ varies with the parameters $\rho_s$ and $\rho_1$ in Figure \ref{fig:convergence_rhos} and  Figure \ref{fig:convergence_rho1}, respectively. The figures shows that the convergence bound $\kappa$ increases both with the increase of the parameters $\rho_s$ and $\rho_1$. Moreover, it can be also observed that for small values of $\rho_s$, $\kappa$ does not vary much with $\rho_1$, while $\kappa$ varies greatly with the parameter $\rho_s$ even for small $\rho_1$.

\begin{figure}[!ht]
  \centering
    \includegraphics*[height=.7\textwidth,width=0.7\textwidth,keepaspectratio=true]{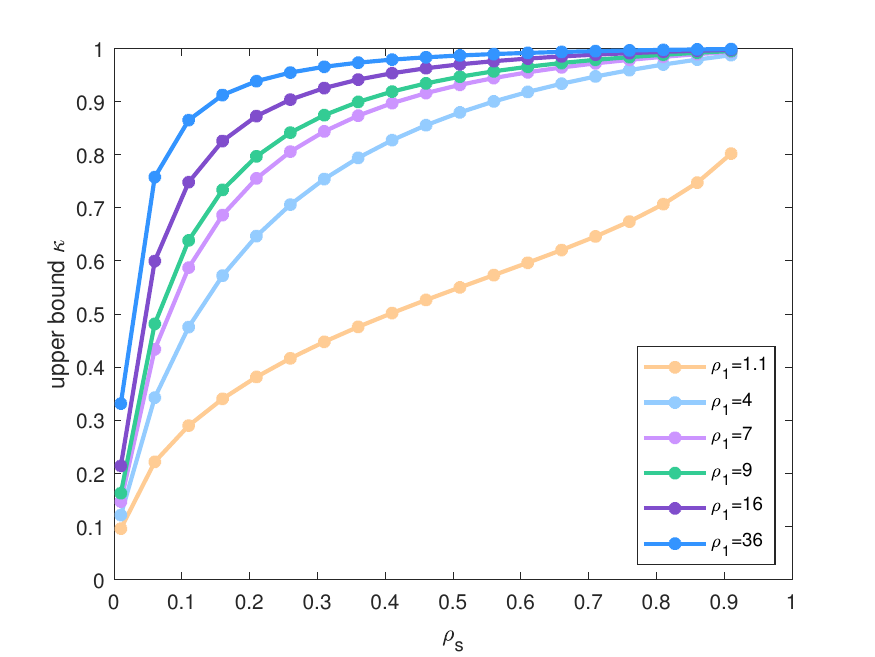}
    \caption{The convergence upper bound $\kappa$ of the PCTL algorithm varies with the parameter $\rho_s$.}\label{fig:convergence_rhos}
 \end{figure}

 \begin{figure}[!ht]
  \centering
    \includegraphics*[height=.7\textwidth,width=0.7\textwidth,keepaspectratio=true]{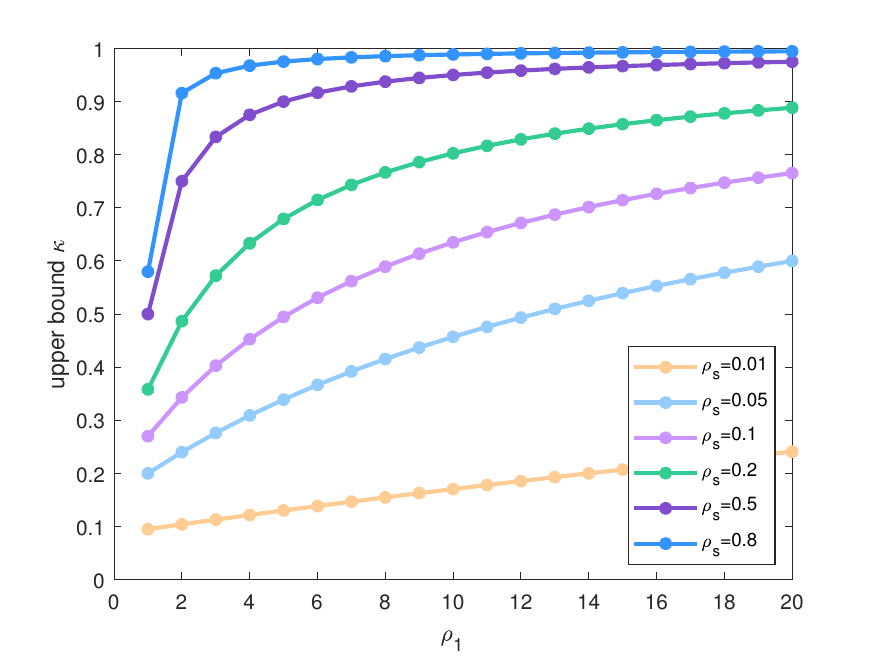}
    \caption{The convergence upper bound $\kappa$ of the PCTL algorithm varies with the parameter  $\rho_1$.}\label{fig:convergence_rho1}
 \end{figure}

Next, we try to give a theoretical explanation for the above phenomena.
\begin{itemize}
  \item[i)] Note that
        $$
        \rho_s = \lambda_{max}(A_e^{-1}(D_{er}A_r^{-1}D_{er}^T + D_{ei}A_i^{-1}D_{ei}^T)),
        $$
        and the matrix $S_e = A_e - (D_{er}A_r^{-1}D_{er}^T + D_{ei}A_i^{-1}D_{ei}^T)$ is the Schur-complement matrix of the coefficient matrix $A$, which reflects the strength of positive definitiveness of the linear system on coarse-level and reflects the strength of positive definitiveness of the linear system \eqref{eq:main} to some extent. Therefore, the smaller the parameter $\rho_s$, the stronger the positive definitiveness of coarse-grid matrix, and then the linear system on coarse-level is easier to solve, such that leads to the better convergence of the PCTL algorithm.
  \item[ii)] Since
            $$
              \frac{1}{\sqrt{\rho_1}} \leq \lambda(P_{\alpha}^{-1}(-A_{\alpha}^{-1}D_{e \alpha})) \leq 1,
            $$
            thus small value of $\rho_1$ indicates the interpolation operator $P_{\alpha}$ is a good approximation to the ideal interpolation operator $-A_{\alpha}^{-1} D_{e \alpha}$, which shows a logical fact that the better the interpolation operator approximates to the ideal interpolation, the better the efficiency of the PCTL algorithm. 
 \end{itemize}

 \subsection{Discussions of the influences of matrix properties on the efficiency of PCTL algorithm}

 In practical applications, the values of  $\rho_s$ and $\rho_1$ are usually not easy to calculate. Thus, in this subsection, we tend to give some easier-to-compute quantities to measure the convergence of the PCTL algorithm, including the diagonally dominant strength of $A_{\alpha}$ and coupling strength of $A$. To do it, we first introduce some quantities.
\begin{itemize}
  \item {\bf Diagonally dominant strength of $A_{\alpha}$} 
  
  Recall $A_{\alpha} = (a_{kj}^{\alpha})_{n \times n}$ $(\alpha=r,i,e)$, then define the diagonally dominant strength of the $k$-th ( $1 \leq k \leq n$)  row of $A_{\alpha}$ as
  \begin{equation}\label{eq:theta}
     \theta_k^{\alpha} :=  \frac{\sum_{1 \leq j \leq n} a_{kj}^{\alpha}}{a_{kk}^{\alpha}} \in (0,1].
  \end{equation}
  Note that $a_{kk}^{\alpha} >0$ and $a_{kj}^{\alpha} \leq 0 \, (j \neq k)$, such that the bigger $\theta_{k}^{\alpha}$ gives the stronger diagonal dominance.
  \item {\bf Coupling strength} 
  
  For the matrix $A$, define the coupling strength for the $k$-th ( $1 \leq k \leq n$) row as 
      \begin{equation}\label{eq:delta}
        \begin{aligned}
          & \delta_{k}^{r} := \frac{|d_{k}^{r}|}{a_{kk}^{r}} \in [0,\theta_{k}^{r}), \quad 
          \delta_{k}^{i} := \frac{|d_{k}^{i}|}{a_{kk}^{i}} \in [0,\theta_{k}^{i}), \\
          & \delta_{k}^{e} := \frac{|d_{k}^{r} + d_{k}^{i}|}{a_{kk}^e} \in [0,\theta_{k}^{e}).
        \end{aligned}
      \end{equation}
  Obviously, the bigger $\delta_k^{\alpha}$ indicates the stronger coupling.
\end{itemize}

\begin{theorem}\label{thm:convergence_pctl_loose}
  For the 3-T linear system \cref{eq:main} with the coupling terms $D_{er}$ and $D_{ei}$ be non-singular, the convergence factor of the PCTL algorithm satisfies
  $$
  \|E_{PCTL}\|_A \leq \frac{\mu_s^2 + (2 \mu_1^2 -3)\mu_s + (1-\mu_s) \sqrt{\mu_s^2 + 4 \mu_s}}{2(\mu_1^2 -2)\mu_s +2},
  $$
  with 
  $$
  \mu_1 = \max_{1 \leq k \leq n} \{ \frac{(2-\theta_k^{r})(1-\theta_k^r + \delta_k^r)}{\delta_k^r},\,\frac{(2-\theta_k^{i})(1-\theta_k^i + \delta_k^i)}{\delta_k^i} \}
 $$
 and 
 $$
 \mu_s = \max_{1 \leq k \leq n} \{ \frac{\delta_k^e}{\theta_k^e} \},
 $$
 where $\theta_k^{\alpha}$ and $\delta_k^{\alpha}$ are defined as in \cref{eq:theta} and \cref{eq:delta}.
\end{theorem}

\begin{proof}

From the proof in \cref{thm:post_smooth_pctl} and \cref{thm:approx_pctl}, one could derive that if there exists parameters $\mu_1$ and $\mu_s$ such that 
$$
 \rho_s \leq \mu_s \quad \text{ and }  \quad  \rho_1 \leq \mu_1^2, 
$$
then the post-smoothing and approximation properties of the PCTL algorithm also hold with $\rho_s$ and $\rho_1$ in \cref{thm:post_smooth_pctl} and \cref{thm:approx_pctl} be replaced by $\mu_s$ and $\mu_1^2$, respectively, and thus lead to  
$$
\|E_{PCTL}\|_A \leq \frac{\mu_s^2 + (2 \mu_1^2 -3)\mu_s + (1-\mu_s) \sqrt{\mu_s^2 + 4 \mu_s}}{2(\mu_1^2 -2)\mu_s +2}.
$$

On the one hand, according to \cref{lem:eig}, $\rho_1 \leq \mu_1^2$ is equivalent to that for $\forall x \in \mathbb{R}^n$, there hold
$$
\frac{x^T(-D_{er} P_r^{-1})x}{x^T A_r x} \geq \frac{1}{\mu_1} \quad \text{ and }  \quad     \frac{x^T(-D_{ei} P_i^{-1})x}{x^T A_i x} \geq \frac{1}{\mu_1},
$$
which further indicate 
$$
-\mu_1 D_{er} P_r^{-1} - A_r \geq 0 \quad \text{ and }  \quad -\mu_1 D_{ei} P_i^{-1} - A_i \geq 0.
$$ 
 From the
Gerschgorin Disk Theorem in \cref{lem:Gerschgorin} we know, if a symmetric matrix is diagonal dominant with positive diagonal elements, it must be a SPSD matrix. Thus, if
there holds
$$
\frac{-\mu_1 d_k^{\alpha}}{p_k^{\alpha}} - a_{kk}^{\alpha} \geq \sum_{j\neq k}(-a_{kj}^{\alpha}) \quad 1\leq k \leq n, \, \alpha=r,i,
$$
that is 
$$
\mu_1 = \max_{1 \leq k \leq n} \{ \frac{(2-\theta_k^{r}) p_k^r}{\delta_k^r},\,\frac{(2-\theta_k^{i}) p_k^i}{\delta_k^i} \},
$$
the matrices $-\mu_1 D_{er} P_r^{-1} - A_r$ and $-\mu_1 D_{ei} P_i^{-1} - A_i$ must be SPSD. 

Next, we discuss the the range of $p_k^{\alpha}$ and take $p_1^r$ as an example. The property $A_r^{-1} A_r =I$ gives $\sum_{j=1}^n b_{1j}^r a_{j1}^r =1$ and
\begin{equation*}
  \begin{aligned}
    & \sum_{l=1}^n b_{1l}^r a_{lj}^r = 0, \quad 2 \leq j \leq n \\
    \Rightarrow  \ &b_{11}^r \sum_{l=2}^n a_{1l}^r  + \cdots +  b_{1n}^r \sum_{l=2}^n a_{nl}^r = 0\\
    \Rightarrow \  &b_{11}^r (s_1 - a_{11}^r - d_1^r) + \cdots + b_{1n}^r (s_n - a_{n1}^r - d_n^r) =0,
  \end{aligned}
\end{equation*}
which further lead to
\begin{equation}\label{eq:Zero_r2}
  1 - p_1^r = \sum_{j=1}^n b_{1j}^r s_j >0,
\end{equation} 
where $s_j$ denotes the sum of the $j$-th row's elements of the matrix $A$.
Furthermore, $\sum_{j=1}^n b_{1j}^r a_{j1}^r =1$ also shows that 
$$
b_{11}^r \geq \frac{1}{a_{11}^r},
$$
then it gives
$$
1-p_1^r \geq b_{11}^r s_1 \geq \frac{s_1}{a_{11}^r} = \theta_1^r - \delta_1^r \hspace{2mm} \Rightarrow
\hspace{2mm}  p_1^r \leq 1-\theta_1^r + \delta_1^r.
$$
In the same way, we can obtain
$$
p_k^{\alpha} \leq 1-\theta_k^{\alpha} + \delta_k^{\alpha}, \quad 1\leq k \leq n, \, \alpha=r,i.
$$
Thus $\mu_1$ could also take 
$$
\mu_1 = \max_{1 \leq k \leq n} \{ \frac{(2-\theta_k^{r})(1-\theta_k^r + \delta_k^r)}{\delta_k^r},\,\frac{(2-\theta_k^{i})(1-\theta_k^i + \delta_k^i)}{\delta_k^i} \},
$$
Although it leads to a looser upper bound on the convergence of the PCTL algorithm, it is practical and very cheap to compute.

On the other hand,
$$
\frac{x^T(D_{er}A_r^{-1}D_{er}^T + D_{ei}A_{i}^{-1}D_{ei}^T)x}{x^T A_e x} \leq \rho_s \leq \mu_s, \quad \forall x \in \mathbb{R}^n
$$
is equivalent to the matrix 
$$
\mu_s A_e - (D_{er}A_r^{-1}D_{er}^T + D_{ei}A_{i}^{-1}D_{ei}^T)
$$
be SPSD, which is further equivalent to the matrix
$$
\hat{A}:=
\begin{bmatrix} 
  A_r &  0   & D_{er}^T \\
  0   & A_i  & D_{ei}^T \\
  D_{er} & D_{ei}  & \mu_s A_e 
\end{bmatrix}
$$
be a SPSD matrix. According to the Gerschgorin Disk Theorem in \cref{lem:Gerschgorin}, the positive semi-definitiveness of the matrix $\hat{A}$ can be satisfied if 
\begin{equation}\label{eq:mus}
  \mu_s a_{kk}^e \geq \mu_s \sum_{j=2}^n |a_{kj}^e| + |d_k^r| +|d_k^i|, \quad 1 \leq k \leq n,
\end{equation}
where the diagonal dominance of the first $2n$ rows of $\hat{A}$ is guaranteed by the diagonal dominance of the coefficient matrix $A$.
Note that $a_{kj}^e \leq 0 (k \neq j)$ and $d_k^r, d_k^i < 0$, then the inequality \cref{eq:mus} leads to
$$
\mu_s = \max_{1 \leq k \leq n} \{ \frac{\delta_k^e}{\theta_k^e} \}.
$$
Thus, the conclusions of Theorem \ref{thm:convergence_pctl_loose} are obtained. The proof is completed.
\end{proof}

In order to further visualize the impacts of the diagonally dominant strength and the coupling strength on the convergence of the PCTL algorithm, we plot the curves that the convergence bound $\kappa_1$ varies with these two factors as shown in Figure \ref{fig:convergence_theta} and Figure \ref{fig:convergence_delta}, respectively. Moreover, to be able to show in greater details the influences of these two factors,  we set the diagonally dominant strength and the coupling strength of all rows are the same, and denote the diagonally dominant strength and the coupling strength by $\delta$ and $\theta$, respectively. The figures illustrate  that the convergence factor is negatively correlated to the diagonally dominant strength and positively related to the coupling strength as a whole, which show the stronger the diagonal dominance of $A_{\alpha}$ is and the weaker of the coupling terms are, the better the convergence of the PCTL algorithm is. 

\begin{figure}[!ht]
  \centering
    \includegraphics*[height=.7\textwidth,width=0.7\textwidth,keepaspectratio=true]{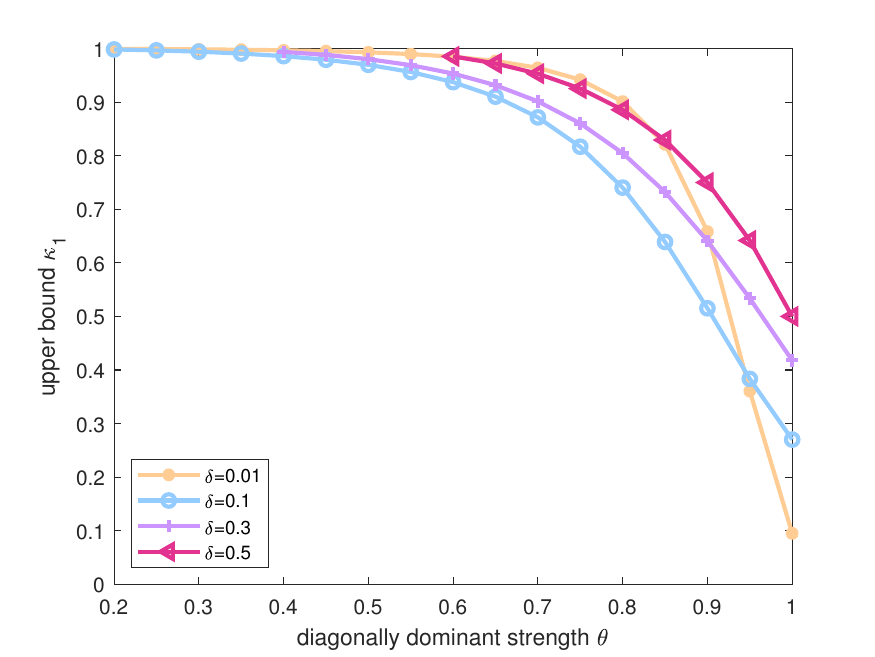}
    \caption{The convergence upper bound of the PCTL algorithm varies with the diagonally dominant strength.}\label{fig:convergence_theta}
 \end{figure}

 \begin{figure}[!ht]
  \centering
    \includegraphics*[height=.7\textwidth,width=0.7\textwidth,keepaspectratio=true]{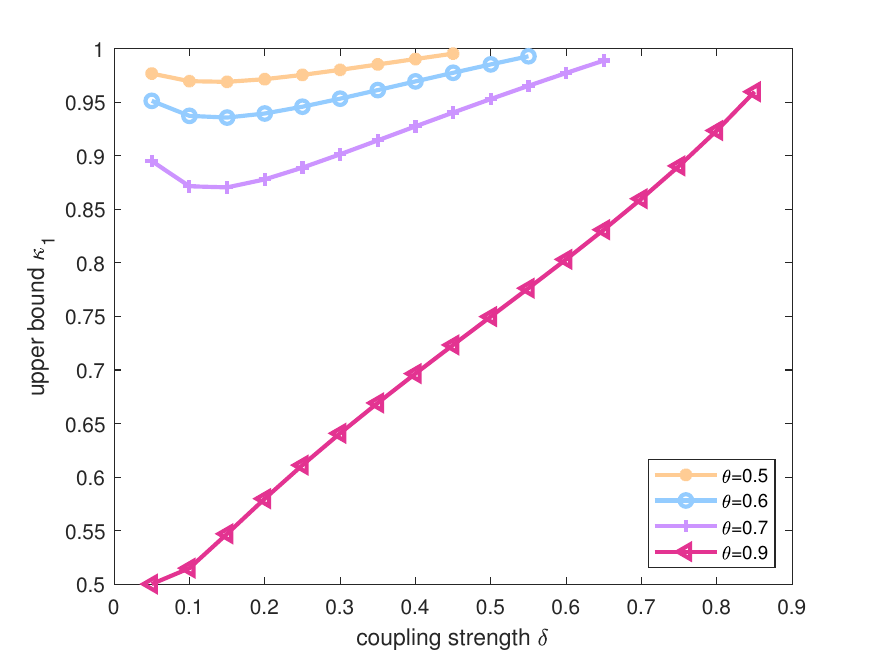}
    \caption{The convergence upper bound $\kappa$ of the PCTL algorithm varies with the coupling strength.}\label{fig:convergence_delta}
 \end{figure}

\section{Conclusions}\label{sec:conclusion}

In this work, the convergence properties of the PCTL algorithm have been analyzed. Specifically, by proving the PCTL algorithm satisfies the smoothing property and approximation property, we have derived an upper bound on its convergence factor.  Moreover, we have also discussed the factors that affect the convergence of the PCTL algorithm and given some easier-to-compute quantities (diagonally dominant strength and the coupling strength) to measure it. It shows that the more diagonally dominant the matrix $A_{\alpha}$ is, the
better the efficiency of the PCTL algorithm is.

\section*{Acknowledgments}

The work is financially supported by the China Postdoctoral Science Foundation (2022M710461) and the National Natural Science Foundation of China
(62032023).

\bibliographystyle{siamplain}
\bibliography{references}

\begin{thebibliography}{10}

\bibitem{An2019operator}
{\sc H.-B. An, Z.-Y. Mo, X.-W. Xu, and X.-W. Jia}, {\em Operator-based
  preconditioning for the 2-d 3-t energy equations in radiation hydrodynamics
  simulations}, Journal of Computational Physics, 385 (2019), pp.~51--74.

\bibitem{Baldwin1999iterative}
{\sc C.~Baldwin, P.~N. Brown, R.~Falgout, F.~Graziani, and J.~Jones}, {\em
  Iterative linear solvers in a 2d radiation--hydrodynamics code: methods and
  performance}, Journal of Computational Physics, 154 (1999), pp.~1--40.

\bibitem{Brandt1986algebraic}
{\sc A.~Brandt}, {\em Algebraic multigrid theory: The symmetric case}, Applied
  Mathematics and Computation, 19 (1986), pp.~23--56.

\bibitem{Falgout2004on}
{\sc R.~D. Falgout and P.~S. Vassilevski}, {\em On generalizing the algebraic
  multigrid framework}, SIAM Journal on Numerical Analysis, 42 (2004),
  pp.~1669--1693.

\bibitem{Falgout2005on}
{\sc R.~D. Falgout, P.~S. Vassilevski, and L.~T. Zikatanov}, {\em On two-grid
  convergence estimates}, Numerical Linear Algebra with Applications, 12
  (2005), pp.~471--494.

\bibitem{Hu2017domain}
{\sc Q.-Y. Hu and L.~Zhao}, {\em Domain decomposition preconditioners for the
  system generated by discontinuous galerkin discretization of 2d-3t heat
  conduction equations}, Communications in Computational Physics, 22 (2017),
  pp.~1069--1100.

\bibitem{Huang2022aSetup}
{\sc S.-L. Huang, X.-Q. Yue, and X.-W. Xu}, {\em $\alpha${Setup-PCTL}: An
  adaptive setup-based two-level preconditioner for sequence of linear systems
  of three-temperature energy equations}, Communications in Computational
  Physics, 32 (2023), pp.~1287--1309.

\bibitem{MacLachlan2014theoretical}
{\sc S.~P. MacLachlan and L.~N. Olson}, {\em Theoretical bounds for algebraic
  multigrid performance: review and analysis}, Numerical Linear Algebra with
  Applications, 21 (2014), pp.~194--220.

\bibitem{Mandel1988algebraic}
{\sc J.~Mandel}, {\em Algebraic study of multigrid methods for symmetric,
  definite problems}, Applied Mathematics and Computation, 25 (1988),
  pp.~39--56.

\bibitem{McCormick1985multigrid}
{\sc S.~McCormick}, {\em Multigrid methods for variational problems: General
  theory for the v-cycle}, SIAM Journal on Numerical Analysis, 22 (1985),
  pp.~634--643.

\bibitem{McCormick1982multigrid}
{\sc S.~McCormick and J.~Ruge}, {\em Multigrid methods for variational
  problems}, SIAM Journal on Numerical Analysis, 19 (1982), pp.~924--929.

\bibitem{Mihalas1984foundations}
{\sc D.~Mihalas and B.~W. Mihalas}, {\em Foundations of radiation
  hydrodynamics}, Oxford University Press, 1984.

\bibitem{Mo2004parallel}
{\sc Z.-Y. Mo, L.-J. Shen, and G.~Wittum}, {\em Parallel adaptive multigrid
  algorithm for 2-d 3-t diffusion equations}, International Journal of Computer
  Mathematics, 81 (2004), pp.~361--374.

\bibitem{Notay2007convergence}
{\sc Y.~Notay}, {\em Convergence analysis of perturbed two‐grid and multigrid
  methods}, SIAM Journal on Numerical Analysis, 45 (2007), pp.~1035--1044.

\bibitem{Notay2010algebraic}
{\sc Y.~Notay}, {\em Algebraic analysis of two-grid methods: The nonsymmetric
  case}, Numerical Linear Algebra with Applications, 17 (2010), pp.~73--96.

\bibitem{Pomraning1973the}
{\sc G.~C. Pomraning}, {\em The equations of radiation hydrodynamics}, Pergamon
  Press, 1973.

\bibitem{Ruge1987algebraic}
{\sc J.~Ruge and K.~Stüben}, {\em Algebraic Multigrid (AMG), In "Multigrid
  Methods"(S.F. McCormick ed.)}.

\bibitem{Saad2003iterative}
{\sc Y.~Saad}, {\em Iterative methods for sparse linear systems}, SIAM, 2003.

\bibitem{Stuben1999algebraic}
{\sc K.~Stueben}, {\em Algebraic multigrid ({AMG}): An introduction with
  applications},  (1999).

\bibitem{Trottenberg2000multigrid}
{\sc U.~Trottenberg, C.~W. Oosterlee, and A.~Schuller}, {\em Multigrid},
  Elsevier, 2000.

\bibitem{Varga1962matrix}
{\sc R.~S. Varga}, {\em Matrix iterative analysis}, Springer, 1962.

\bibitem{Xu2022convergence}
{\sc X.-F. Xu and C.-S. Zhang}, {\em Convergence analysis of inexact two-grid
  methods: A theoretical framework}, SIAM Journal on Numerical Analysis, 60
  (2022), pp.~133--156.

\bibitem{Xu2022a}
{\sc X.-F. Xu and C.-S. Zhang}, {\em A new analytical framework for the
  convergence of inexact two-grid methods}, SIAM Journal on Matrix Analysis and
  Applications, 43 (2022), pp.~512--533.

\bibitem{Xu2017algebraic}
{\sc X.-W. Xu and Z.-Y. Mo}, {\em Algebraic interface-based coarsening amg
  preconditioner for multi-scale sparse matrices with applications to radiation
  hydrodynamics computation}, Numerical Linear Algebra with Applications, 24
  (2017), p.~e2078.

\bibitem{Xu2009algebraic}
{\sc X.-W. Xu, Z.-Y. Mo, and H.-B. An}, {\em Algebraic two-level iterative
  method for {2-D 3-T} radiation diffusion equations}, Jisuan Wuli/Chinese
  Journal of Computational Physics, 26 (2009), pp.~1--8.

\bibitem{Xu2022jxpamg}
{\sc X.-W. Xu, X.-Q. Yue, R.-Z. Mao, Y.-T. Deng, S.-L. Huang, H.-F. Zou,
  X.~Liu, S.-L. Hu, C.-S. Feng, S.~Shu, et~al.}, {\em {JXPAMG}: a parallel
  algebraic multigrid solver for extreme-scale numerical simulations}, CCF
  Transactions on High Performance Computing,  (2022), pp.~1--12.

\bibitem{Yue2018substructuring}
{\sc X.-Q. Yue, S.~Shu, J.-X. Wang, and Z.-Y. Zhou}, {\em Substructuring
  preconditioners with a simple coarse space for 2-d 3-t radiation diffusion
  equations}, Communications in Computational Physics, 23 (2018), pp.~540--560.

\bibitem{Yue2015an}
{\sc X.-Q. Yue, S.~Shu, X.-W. Xu, and Z.-Y. Zhou}, {\em An adaptive combined
  preconditioner with applications in radiation diffusion equations},
  Communications in Computational Physics, 18 (2015), pp.~1313--1335.

\bibitem{Yue2020algebraic}
{\sc X.-Q. Yue, S.-L. Zhang, X.-W. Xu, S.~Shu, and W.-D. Shi}, {\em Algebraic
  multigrid block preconditioning for multi-group radiation diffusion
  equations}, Communications in Computational Physics, 29 (2021), pp.~831--852.

\bibitem{Zhou2012an}
{\sc Z.-Y. Zhou, X.-W. Xu, S.~Shu, C.-S. Feng, and Z.-Y. Mo}, {\em An adaptive
  two-level preconditioner for 2-d 3-t radiation diffusion equations}, Jisuan
  Wuli/Chinese Journal of Computational Physics, 29 (2012), pp.~475--483.

\end{thebibliography}
\end{document}